\newtheorem{thm}{Theorem}[section]
\newtheorem{prop}[thm]{Proposition}
\newtheorem{lem}[thm]{Lemma}
\newtheorem{coro}[thm]{Corollary}
\newtheorem{example}[thm]{Example}
\newtheorem{rem}[thm]{Remark}
\numberwithin{equation}{section}
\def\Z{\mathbb Z}
\def\Q{\mathbb Q}
\def\D{\mathrm{Diff}_{\partial}}
\title{A short note on $\pi_1\D( D^{4k})$ for $k\geq 3$}
\author{Wei Wang}
\address{Department of Mathematics and Computational Science, Shanghai Ocean University, Shanghai 201306, China}
\email{weiwang@amss.ac.cn}
\date{}
\begin{document}

\maketitle

\begin{abstract}
Let $\D(D^{n})$ be the topological group of diffeomorphisms of $D^{n}$ which agree with the identity near the boundary. In this short note, we compute the fundamental group $\pi_1 \D(D^{4k})$ for $k\geq 3$.
\end{abstract}

\section{Introduction}
Let $\D(D^n)$ be the topological group of diffeomorphisms of a disc $D^n$ which agree with the identity near the boundary of $D^n$ endowed with the Whitney topology (e.g. \cite[Chapter 2]{Hirsch}, \cite[Chapter 3-6]{Kupers}). The homotopy type of $\D(D^n)$ is a very important object in algebraic and geometric topology.

When $n=1,2,3$, $\D(D^n)$ is contractible  (e.g. \cite{Hat2}). Watanabe \cite{Watanabe1} disproved the  4-dimensional Smale
conjecture that $\D(D^4)$ is contractible. When $n\geq 5$, $\D(D^n)$ is not contractible  \cite{Hat2} and many computations of $\pi_i\D(D^n)$ have been done. Burghelea and Lashof \cite{BL,BL2} constructed many torsion elements in $\pi_i\D(D^n)$.  
Farrell and Hsiang computed $\pi_i \D(D^n)\otimes \Q$ in the stable concordance range. Recent breakthroughs on $\pi_i \D(D^n)\otimes \Q$ have been made by Galatius, Krannich, Kupers, Randal-Williams, Watanabe, Weiss and other people. We refer to Randal-Williams's ICM talk \cite{RW} for more results and references.

In this short note, we will be concerned with the fundamental group $\pi_{1}\D(D^{4k})$ of $\D(D^{4k})$.
Our main result is:
\begin{thm}\label{main thm}
	When $k\geq 3$, $\pi_1\D(D^{4k})\cong \Theta_{4k+2}$, where $\Theta_{4k+2}$ denotes the group of exotic spheres of dimension $4k+2$.\\
\end{thm}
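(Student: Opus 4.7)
The natural framework is the Cerf pseudoisotopy fibration. Let $P(D^n)$ denote the space of pseudoisotopies of $D^n$, i.e.\ diffeomorphisms of $D^n\times I$ equal to the identity on $(D^n\times\{0\})\cup(\partial D^n\times I)$. Restriction to $D^n\times\{1\}$ fits into a fiber sequence
\[
\D(D^{n+1})\longrightarrow P(D^n)\longrightarrow \D(D^n),
\]
whose fiber is identified with $\D(D^{n+1})$ via $D^n\times I\cong D^{n+1}$ (after smoothing corners). The plan is to apply this with $n=4k$ and extract the segment
\[
\pi_1 P(D^{4k})\longrightarrow \pi_1\D(D^{4k})\xrightarrow{\ \partial\ }\pi_0\D(D^{4k+1})\longrightarrow \pi_0 P(D^{4k})
\]
of the long exact sequence of homotopy groups. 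The connecting map $\partial$ has an explicit geometric description: a loop $\{\gamma_t\}_{t\in S^1}$ of diffeomorphisms of $D^{4k}$ based at the identity is sent to the self-diffeomorphism $(x,t)\mapsto(\gamma_t(x),t)$ of $D^{4k}\times I\cong D^{4k+1}$, whose isotopy class corresponds to an exotic sphere in $\Theta_{4k+2}$.

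\smallskip

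\noindent\textbf{Step 1 (surjectivity of $\partial$).} I invoke two classical results of Cerf. First, pseudoisotopy implies isotopy for simply connected manifolds of dimension $\geq 5$, so the pseudoisotopy space is connected and $\pi_0 P(D^{4k})=0$ for $k\geq 2$. Second, the same circle of ideas combined with Kervaire--Milnor gives $\pi_0\D(D^{n})\cong\Theta_{n+1}$ for $n\geq 5$, hence $\pi_0\D(D^{4k+1})\cong\Theta_{4k+2}$. Together these produce a surjection $\partial\colon\pi_1\D(D^{4k})\twoheadrightarrow\Theta_{4k+2}$.

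\smallskip

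\noindent\textbf{Step 2 (injectivity of $\partial$).} It remains to show that the image of $\pi_1 P(D^{4k})\to\pi_1\D(D^{4k})$ is trivial, or equivalently by exactness that $\pi_1\D(D^{4k+1})\to\pi_1 P(D^{4k})$ is surjective. This is where the hypothesis $k\geq 3$ (so $4k\geq 12$) should be used: Igusa's pseudoisotopy stability theorem places $\pi_1 P(D^{4k})$ in the stable concordance range, and the stable pseudoisotopy space of a point is accessible through Waldhausen's $A$-theory together with the low-degree values of $K_*(\Z)$. One then needs to verify that every resulting class is represented by an $S^1$-family of diffeomorphisms of $D^{4k+1}$ rel boundary, so its image in $\pi_1\D(D^{4k})$ is nullhomotopic.

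\smallskip

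\noindent\textbf{Main obstacle.} Step 2 is the only delicate ingredient; Step 1 is standard Cerf theory. An alternative approach via Morlet's comparison theorem would identify $\pi_1\D(D^{4k})$ with $\pi_{4k+2}(\mathrm{Top}(4k)/O(4k))$ and then apply Kirby--Siebenmann's $\pi_m(\mathrm{Top}/O)\cong\Theta_m$ for $m\geq 5$, but this merely recasts the same difficulty as the need to show that the stabilization $\mathrm{Top}(4k)/O(4k)\to\mathrm{Top}/O$ is an isomorphism on $\pi_{4k+2}$, for which the hypothesis $k\geq 3$ is again essential.
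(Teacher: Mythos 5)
Your Step 1 is sound and corresponds to what the paper gets from the vanishing of $\pi_1\bigl(\widetilde{\mathrm{Diff}}_{\partial}(D^{4k})/\D(D^{4k})\bigr)$ together with $\pi_1\widetilde{\mathrm{Diff}}_{\partial}(D^{4k})\cong\pi_0\D(D^{4k+1})\cong\Theta_{4k+2}$. The gap is Step 2, which you correctly flag as the crux but do not actually carry out. Igusa's stability theorem and the Waldhausen/$K$-theory computation only tell you that $\pi_1 P(D^{4k})\cong\pi_1\mathcal{C}(D^{4k})\cong\pi_3\mathrm{Wh}^{\mathrm{Diff}}(\ast)\cong\Z_2$ as an abstract group; they say nothing about the image of its generator under $\pi_1 P(D^{4k})\to\pi_1\D(D^{4k})$, and whether that $\Z_2$ dies is exactly the content of the theorem. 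This cannot be a formal consequence of stability or of $A$-theory alone: the paper's Remark 1.2 records that the conclusion fails for $k=1$, so a dimension-specific geometric input is unavoidable. The same objection applies verbatim to your Morlet-style alternative, where the needed isomorphism $\pi_{4k+2}(\mathrm{Top}(4k)/O(4k))\to\pi_{4k+2}(\mathrm{Top}/O)$ is again the unproven point.

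The paper supplies the missing input as follows. It runs the long exact sequence of the block-diffeomorphism fibration, identifies $\pi_2\bigl(\widetilde{\mathrm{Diff}}_{\partial}(D^{4k})/\D(D^{4k})\bigr)\cong\pi_1\mathcal{C}(D^{4k})\cong\Z_2$ via the Hatcher spectral sequence (your $\pi_1 P(D^{4k})$ in disguise), and then proves that $\pi_2\widetilde{\mathrm{Diff}}_{\partial}(D^{4k})\cong\Theta_{4k+3}$ surjects onto this $\Z_2$. The surjectivity rests on a theorem of Weiss and Williams: the generator of $bP_{4k+4}\subset\Theta_{4k+3}$ does not lie in the Gromoll filtration group $\Gamma_3^{4k+3}$, which by Antonelli--Burghelea--Kahn is the image of $\pi_2\D(D^{4k})\to\pi_2\widetilde{\mathrm{Diff}}_{\partial}(D^{4k})$; hence $\Theta_{4k+3}/\Gamma_3^{4k+3}$ is all of $\Z_2$ and exactness forces the boundary map $\Z_2\to\pi_1\D(D^{4k})$ to vanish. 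In your language this is precisely the statement that the nontrivial loop of concordances lifts to a loop in $\D(D^{4k+1})$. To complete Step 2 you would need to import this Gromoll-filtration computation (or an equivalent substitute); without it the proof is incomplete.
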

\begin{rem}
 According to \cite[Remark 1.2]{Watanabe1}, $\pi_1\D(D^{4})\otimes \Q \cong \pi_2 \mathrm{B}\D(D^{4})\otimes \Q \neq\Theta_6\otimes \Q =0$. It follows that Theorem \ref{main thm} does not hold when $k=1$. The case when $k=2$ is not known, see Remark \ref{Remark1} for more details.
\end{rem}

As a consequence of Theorem \ref{main thm}, let $\mathrm{Diff}(S^n)$ be the topological group of diffeomorphisms of $S^n$. We have
\begin{coro}
When $k\geq 3$, 		$\pi_1\mathrm{Diff}(S^{4k})\cong \Z_2\oplus \Theta_{4k+2}$.\\
\end{coro}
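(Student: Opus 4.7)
The plan is to deduce the corollary from Theorem~\ref{main thm} by combining the classical restriction fibration with a splitting produced by the linear action of $O(4k+1)$ on $S^{4k}$. First I would fix a smoothly embedded disc $D^{4k} \subset S^{4k}$ (say the lower hemisphere) and consider the restriction map
\[
r\colon \mathrm{Diff}(S^{4k}) \longrightarrow \mathrm{Emb}(D^{4k}, S^{4k}),
\]
which by the parametrised isotopy extension theorem is a locally trivial fibration. Its fibre over the standard inclusion consists of those diffeomorphisms of $S^{4k}$ that are the identity on the chosen hemisphere, and these are precisely the elements of $\D(D^{4k})$ (now viewing the complementary hemisphere as the ambient disc).

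Next I would identify the base up to homotopy. Sending an embedding to its $1$-jet at the centre of $D^{4k}$ and applying Gram--Schmidt orthonormalisation gives a homotopy equivalence
\[
\mathrm{Emb}(D^{4k},S^{4k}) \;\simeq\; \mathrm{Fr}(TS^{4k}) \;\cong\; O(4k+1),
\]
the last identification using that the orthonormal frame bundle of the round $S^n$ is the principal $O(n)$-bundle $O(n+1)\to S^n$. The decisive observation is that the standard linear inclusion $\iota\colon O(4k+1) \hookrightarrow \mathrm{Diff}(S^{4k})$ is a homotopy section of $r$: under the identification above, $r\circ\iota$ is homotopic to the identity of $O(4k+1)$.

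Such a section forces the long exact sequence of the fibration to split on $\pi_i$ for every $i\geq 1$, yielding
\[
\pi_1 \mathrm{Diff}(S^{4k}) \;\cong\; \pi_1 \D(D^{4k}) \oplus \pi_1 O(4k+1).
\]
Since $4k+1\geq 13$ lies well inside the stable range, $\pi_1 O(4k+1)\cong\Z_2$, and substituting Theorem~\ref{main thm} produces $\pi_1 \mathrm{Diff}(S^{4k}) \cong \Theta_{4k+2}\oplus\Z_2$. Once Theorem~\ref{main thm} is granted the whole argument is essentially formal, so I do not expect any genuine obstacle: the only non-automatic input is the claim that $\iota$ splits $r$ up to homotopy, which follows at once from the two identifications above.
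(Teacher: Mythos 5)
Your argument is correct and is essentially the paper's own proof: the paper invokes the homotopy equivalence $\mathrm{Diff}(S^{4k})\simeq \mathrm{O}_{4k+1}\times \D(D^{4k})$ (citing the argument of Lemma 1.1.5 in \cite{ABK}), and your fibration-with-linear-section argument is exactly the standard proof of that equivalence. The only cosmetic point is that the fibre of the restriction map consists of diffeomorphisms fixing the hemisphere pointwise rather than near its boundary, but this group is homotopy equivalent to $\D(D^{4k})$ by the usual collar argument, so nothing is lost.
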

\begin{proof}
Similar to the proof of Lemma 1.1.5 in \cite{ABK}, one can show $\mathrm{Diff}(S^n)$ is homotopy equivalent as a topological space to the product of the orthogonal group $\mathrm{O}_{n+1}$ and $\D(D^n)$. Then we have
\[
\pi_1\mathrm{Diff}(S^{4k})\cong\pi_1(\mathrm{O}_{4k+1}\times \D(D^{4k})) \cong \Z_2\oplus \Theta_{4k+2}.
\]
\end{proof}
In the process of computations, we can obtain information about some Gromoll filtration groups (see \cite[Section 1]{Crowley} for the definition). Precisely, we have
\begin{thm}\label{mainthm2}
Let $\Gamma_3^{4k-1}$ be the 3-rd Gromoll filtration group in $\Theta_{4k-1}$ and let $bP_{4k}$ be the subgroup of $\Theta_{4k-1}$ consists of homotopy spheres which bound parallelizable $4k$-manifolds. When $k\geq 4$, we have
\begin{itemize}
	\item[(a)]  $\Theta_{4k-1}/\Gamma_3^{4k-1}\cong \Z_2$.
	\item[(b)] There is an extension of abelian groups 	
	\[
	\begin{CD}
0 @>>> 2bP_{4k}@>>>  \Gamma_3^{4k-1} @>>>\Theta_{4k-1}/bP_{4k}@>>> 0,\\
	\end{CD}
	\]
	where $2bP_{4k}$ denotes the subgroup of $bP_{4k}$ with $bP_{4k}/2bP_{4k}\cong \Z_2$.
	\item[(c)] The extension class $[\Gamma_{3}^{4k-1}]$ belongs to the kernel of the homomorphism $$\mathrm{Ext}(\Theta_{4k-1}/bP_{4k},2bP_{4k})\longrightarrow \mathrm{Ext}(\Theta_{4k-1}/bP_{4k},bP_{4k}),$$ 
	which is induced by the inclusion $2bP_{4k}\longrightarrow bP_{4k}$.
\end{itemize}


\end{thm}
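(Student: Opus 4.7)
The strategy is to apply the same machinery used to prove Theorem~\ref{main thm} one step higher on the homotopy ladder. We expect Theorem~\ref{main thm} to be established via a Weiss--Williams / Burghelea--Lashof-type long exact sequence comparing the smooth diffeomorphism group $\D(D^n)$ with the block diffeomorphism group $\widetilde{\D}(D^n)$, in which $\pi_i \widetilde{\D}(D^n)$ is accessible through surgery and can be identified (in the concordance-stable range) with $\Theta_{n+i+1}$, while the fibre $\widetilde{\D}(D^n)/\D(D^n)$ is controlled by Waldhausen K-theory. Applied with $n=4k$ and $i=1$, this yields the isomorphism $\pi_1\D(D^{4k})\cong \Theta_{4k+2}$ of Theorem~\ref{main thm}. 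For Theorem~\ref{mainthm2} I rerun the same setup with $n=4k-4$ and $i=2$, so that the natural map
\[
\alpha\colon \pi_2\D(D^{4k-4})\longrightarrow \pi_2\widetilde{\D}(D^{4k-4})\cong \Theta_{4k-1}
\]
is exactly the Gromoll clutching map, whose image is by definition $\Gamma_3^{4k-1}$.

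The key input is then the exact sequence fragment
\[
\pi_3(\widetilde{\D}/\D)(D^{4k-4})\to \pi_2\D(D^{4k-4})\xrightarrow{\alpha}\Theta_{4k-1}\to \pi_2(\widetilde{\D}/\D)(D^{4k-4})\to \pi_1\D(D^{4k-4})\xrightarrow{\cong}\Theta_{4k-2},
\]
where the last isomorphism is Theorem~\ref{main thm} applied at level $k-1$ (legal since $k-1\geq 3$). This forces the map $\pi_2(\widetilde{\D}/\D)(D^{4k-4})\to \pi_1\D(D^{4k-4})$ to vanish, so $\Theta_{4k-1}/\Gamma_3^{4k-1}$ embeds into $\pi_2(\widetilde{\D}/\D)(D^{4k-4})$. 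A surgery/K-theoretic calculation of this term pins down the cokernel as the $\Z_2$ generated by an Arf--Kervaire obstruction, yielding (a). For (b), the extension structure on $\Gamma_3^{4k-1}$ comes from analysing the boundary map $\pi_3(\widetilde{\D}/\D)(D^{4k-4})\to \pi_2\D(D^{4k-4})$: its image under $\alpha$ lands in the $bP_{4k}$ summand of $\Theta_{4k-1}$, and the factor of two appears because a Milnor generator of $bP_{4k}$ is realised in Gromoll filtration $2$ (via the usual plumbing presentation) but only its double survives to filtration $3$.

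Part (c) then follows essentially formally. Pushing out the extension in (b) along $2bP_{4k}\hookrightarrow bP_{4k}$, and using $\Gamma_3^{4k-1}+bP_{4k}=\Theta_{4k-1}$ from (a) together with $\Gamma_3^{4k-1}\cap bP_{4k}=2bP_{4k}$ from (b), a direct diagram chase identifies the pushed-out extension with $0\to bP_{4k}\to \Theta_{4k-1}\to \Theta_{4k-1}/bP_{4k}\to 0$; this extension is split in the range $k\geq 4$ by Brumfiel's splitting theorem, which is precisely the statement that $[\Gamma_3^{4k-1}]$ lies in the kernel of the indicated map. The main obstacle will be the identification of the K-theoretic term $\pi_2(\widetilde{\D}/\D)(D^{4k-4})$ with $\Z_2$ and, more delicately, verifying that the image of the preceding boundary map is the subgroup $2bP_{4k}$ rather than the full group $bP_{4k}$; once these identifications are in place, the three conclusions of the theorem follow essentially by diagram chase.
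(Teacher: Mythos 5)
Your overall framework is the right one --- the fibration $\D(D^{n})\to\widetilde{\mathrm{Diff}}_{\partial}(D^{n})\to\widetilde{\mathrm{Diff}}_{\partial}(D^{n})/\D(D^{n})$ with $n=4k-4$, the identification of the image of $\pi_2\D(D^{4k-4})\to\pi_2\widetilde{\mathrm{Diff}}_{\partial}(D^{4k-4})\cong\Theta_{4k-1}$ with $\Gamma_3^{4k-1}$, and the use of Brumfiel's splitting for part (c) all agree with the paper. However, your derivation of part (a) is circular. You deduce the vanishing of $\pi_2(\widetilde{\mathrm{Diff}}_{\partial}(D^{4k-4})/\D(D^{4k-4}))\to\pi_1\D(D^{4k-4})$ from Theorem \ref{main thm} at level $k-1$, i.e.\ from the injectivity of $\pi_1\D(D^{4k-4})\to\pi_1\widetilde{\mathrm{Diff}}_{\partial}(D^{4k-4})$. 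But that injectivity is exactly equivalent (by the same exact sequence) to the surjectivity of $\Theta_{4k-1}\to\pi_2(\widetilde{\mathrm{Diff}}_{\partial}(D^{4k-4})/\D(D^{4k-4}))\cong\Z_2$, i.e.\ to $\Theta_{4k-1}/\Gamma_3^{4k-1}\cong\Z_2$ --- the very statement (a) you are proving. Indeed, in the paper Theorem \ref{main thm} for $D^{4(k-1)}$ is \emph{deduced from} Theorem \ref{mainthm2}(a) at level $k$, so the implication cannot be run in the other direction. This is not an induction that terminates; it is a genuine cycle.

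The paper breaks this cycle with an external input that is absent from your proposal: by Weiss--Williams (Section 6.6 of \cite{WeissWilliams}), the generator of the cyclic group $bP_{4k}$ does \emph{not} lie in $\Gamma_3^{4k-1}$. Combined with Proposition \ref{mainProp} (which shows $\Theta_{4k-1}/\Gamma_3^{4k-1}$ injects into $\Z_2$ --- this part you may legitimately quote, as it is established earlier in the paper via the Hatcher spectral sequence and Igusa stability, not by ``surgery/K-theory''), this forces the quotient to be exactly $\Z_2$, proving (a). The same input then gives (b) formally: since the $bP_{4k}$-generator maps onto $\Theta_{4k-1}/\Gamma_3^{4k-1}\cong\Z_2$, one gets $\Gamma_3^{4k-1}\cap bP_{4k}=2bP_{4k}$ and surjectivity of $\Gamma_3^{4k-1}\to\Theta_{4k-1}/bP_{4k}$ by the snake lemma; no analysis of the boundary map $\pi_3(\widetilde{\mathrm{Diff}}_{\partial}/\D)\to\pi_2\D$ is needed (and your remark that the image of that boundary map ``lands under $\alpha$ in the $bP_{4k}$ summand'' is confused, since that image lies in $\ker\alpha$). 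You correctly flag the identification $\Gamma_3^{4k-1}\cap bP_{4k}=2bP_{4k}$ as the delicate point, but the plumbing heuristic you offer for it is not a proof; it is precisely the Weiss--Williams citation that must be supplied. Your part (c) is essentially the paper's argument and is fine.
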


\begin{rem}
	Gromoll filtration groups have been studied by many people (e.g. \cite{ABK,Crowley,Crowley2,Weiss}). Complete known results have been collected in the appendices in \cite{Crowley,Crowley2}.
\end{rem}

We also have an application to the fundamental group of diffeomorphism groups of certain manifolds. Let
$M^{4k}$ be a compact smooth manifold, possibly with boundary, whose dimension is $4k$, $k\geq 3$.
Let $\D(M^{4k})$ be the group of diffeomorphisms of $M^{4k}$ which are identity near the boundary and let $\widetilde{\mathrm{Diff}}_{\partial}(M^{4k})$ be the block diffeomorphism group of $M^{4k}$ (see e.g. \cite[\S 1, \S 2]{Krannich} for the definition). We have

\begin{thm}\label{mainthm3}
When $M^{4k}$ is 3-connected with dimension $4k\geq 12$, the inclusion induces an isomorphism
\[
\pi_{1}\D(M^{4k})\cong \pi_1 \widetilde{\mathrm{Diff}}_{\partial}(M^{4k}).
\]
\end{thm}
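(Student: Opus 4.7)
The plan is to extract the conclusion from the long exact sequence in homotopy of the fibration
\[
\D(M^{4k}) \;\hookrightarrow\; \widetilde{\mathrm{Diff}}_\partial(M^{4k}) \;\longrightarrow\; \widetilde{\mathrm{Diff}}_\partial(M^{4k})/\D(M^{4k}).
\]
Note that $\pi_0$ of the coset space vanishes because every pseudoisotopy class of diffeomorphisms of $M$ is represented by a diffeomorphism, so $\pi_0\D(M)\twoheadrightarrow \pi_0\widetilde{\mathrm{Diff}}_\partial(M)$. Consequently the inclusion induces an isomorphism on $\pi_1$ as soon as both $\pi_1$ and $\pi_2$ of $\widetilde{\mathrm{Diff}}_\partial(M)/\D(M)$ are trivial, and this is what I would aim to prove.

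These homotopy groups are controlled, in a concordance-stable range depending on $\dim M$, by the smoothing-theoretic description of the coset space going back to Morlet and refined by Burghelea--Lashof, Hatcher, and Weiss--Williams: up to a shift, they are read off from the pseudoisotopy (concordance) space $\mathcal{C}(M)$ together with its higher Hatcher deloopings and a $\mathbb{Z}_2$-involution. Under the standing hypothesis that $M$ is $3$-connected of dimension $4k\geq 12$, a handle decomposition of $M$ uses only handles of index in $[4, 4k-4]$, and the Morlet disjunction lemma permits a handle-by-handle comparison with the disc case $M = D^{4k}$. The correction terms contributed by each middle-dimensional handle live in homotopy groups of $\mathcal{C}$ of the attaching sphere shifted by the handle index, and they vanish in degrees $1$ and $2$ thanks to Cerf's parametrized pseudoisotopy theorem and the $3$-connectivity of the attaching spheres. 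One obtains
\[
\pi_i\bigl(\widetilde{\mathrm{Diff}}_\partial(M)/\D(M)\bigr) \;\cong\; \pi_i\bigl(\widetilde{\mathrm{Diff}}_\partial(D^{4k})/\D(D^{4k})\bigr) \qquad \text{for } i=1,2.
\]

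The vanishing of these groups in the disc case is built into the proof of Theorem \ref{main thm}: the identification $\pi_1\D(D^{4k})\cong \Theta_{4k+2}$ there proceeds via the very same long exact sequence applied to $M = D^{4k}$, and it requires exactly the vanishing of $\pi_1$ and $\pi_2$ of $\widetilde{\mathrm{Diff}}_\partial(D^{4k})/\D(D^{4k})$ in order for the comparison to be an isomorphism on $\pi_1$. The main obstacle is therefore the handle-tower bookkeeping: one must verify, handle by handle, that the Morlet correction terms contribute trivially to $\pi_1$ and $\pi_2$ of the quotient, which rests on $3$-connectivity of $M$ (forcing handles of index $\geq 4$ and attaching spheres of dimension $\geq 3$) together with the dimension bound $4k\geq 12$ that keeps the argument within the concordance-stable range.
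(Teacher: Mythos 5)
Your overall framework --- compare $M^{4k}$ with an embedded disc and run the long exact sequence of the block-diffeomorphism fibration, using Morlet disjunction and $3$-connectivity to identify $\pi_i\bigl(\widetilde{\mathrm{Diff}}_\partial(M^{4k})/\D(M^{4k})\bigr) \cong \pi_i\bigl(\widetilde{\mathrm{Diff}}_\partial(D^{4k})/\D(D^{4k})\bigr)$ --- is the same as the paper's, which cites \cite[Corollary 3.2]{BLR} for that comparison. However, the target you set yourself, namely proving that both $\pi_1$ and $\pi_2$ of the quotient vanish, is not attainable: while $\pi_1\bigl(\widetilde{\mathrm{Diff}}_\partial(D^{n})/\D(D^{n})\bigr)=0$ for $n\geq 5$, Proposition \ref{mainProp} computes $\pi_2\bigl(\widetilde{\mathrm{Diff}}_\partial(D^{n})/\D(D^{n})\bigr)\cong \Z_2$ for $n\geq 9$, so this group is \emph{not} trivial, and by the very Morlet isomorphism you invoke neither is its counterpart for $M^{4k}$. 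Your claim that the proof of Theorem \ref{main thm} ``requires exactly the vanishing of $\pi_1$ and $\pi_2$'' of the disc quotient misreads that argument: what it actually uses is surjectivity of $\pi_2\widetilde{\mathrm{Diff}}_\partial(D^{4k}) \rightarrow \pi_2\bigl(\widetilde{\mathrm{Diff}}_\partial(D^{4k})/\D(D^{4k})\bigr)\cong \Z_2$, which comes from the Gromoll filtration computation $\Theta_{4k-1}/\Gamma_3^{4k-1}\cong \Z_2$ of Theorem \ref{mainthm2}(a), not from any vanishing of the quotient's $\pi_2$.

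The repair is to weaken the goal from ``$\pi_2$ of the quotient is zero'' to ``the boundary map out of $\pi_2$ of the quotient is zero,'' and to transport that statement from the disc to $M^{4k}$ by naturality. Concretely, the extension-by-the-identity homomorphism $\D(D^{4k})\rightarrow \D(M^{4k})$ induces a map of fibration sequences, hence a commutative ladder of long exact sequences. For the disc, Theorem \ref{main thm} gives $\pi_1\D(D^{4k})\cong \pi_1\widetilde{\mathrm{Diff}}_\partial(D^{4k})$, so the boundary map $\pi_2\bigl(\widetilde{\mathrm{Diff}}_\partial(D^{4k})/\D(D^{4k})\bigr)\rightarrow \pi_1\D(D^{4k})$ is trivial. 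Since the vertical map on $\pi_2$ of the quotients in the ladder is an isomorphism, the boundary map $\pi_2\bigl(\widetilde{\mathrm{Diff}}_\partial(M^{4k})/\D(M^{4k})\bigr)\rightarrow \pi_1\D(M^{4k})$ is trivial as well, which gives injectivity of $\pi_1\D(M^{4k})\rightarrow \pi_1\widetilde{\mathrm{Diff}}_\partial(M^{4k})$; surjectivity follows from the vanishing of $\pi_1$ of the quotient, which is the part of your argument that does go through.
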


\

This note is organized as follows. In Section 2 we consider the exact sequence of homotopy groups $\pi_r\D(D^n)\rightarrow \pi_r \widetilde{\mathrm{Diff}}_{\partial}(D^n)\rightarrow \pi_r( \frac{\widetilde{\mathrm{Diff}}_{\partial}(D^n)}{\D(D^n)})$ and compute  $\pi_2( \frac{\widetilde{\mathrm{Diff}}_{\partial}(D^n)}{\D(D^n)})$ for $n\geq 9$. Section 3 discusses the Gromoll filtration group $\Gamma_3^{4k-1}$ for $k\geq 4$. In Section 4 we prove Theorem \ref{main thm} and Theorem \ref{mainthm3}.


\section{$\pi_2(\frac{\widetilde{\mathrm{Diff}}_{\partial}(D^n)}{\D(D^n)} )$ for $n\geq 9$ }
In this section, we assume $n\geq 9$ unless mentioned otherwise.
Let $\widetilde{\mathrm{Diff}}_{\partial}(D^n)$ be the group of block diffeomorphisms of $D^n$. Let $i\colon\D(D^n)\rightarrow\widetilde{\mathrm{Diff}}_{\partial}(D^n)$ be the natural inclusion, the induced homomorphism $i_0\colon\pi_0\D(D^n) \longrightarrow \pi_0\widetilde{\mathrm{Diff}}_{\partial}(D^n)$ is an isomorphism due to Cerf's pseudoisotopy theorem \cite{Cerf} and Smale's $h$-cobordism theorem \cite{Smale}. 
We consider the following long exact sequence of homotopy groups induced by the fiber sequence $\D(D^n)\rightarrow \widetilde{\mathrm{Diff}}_{\partial}(D^n) \rightarrow \frac{\widetilde{\mathrm{Diff}}_{\partial}(D^n)}{\D(D^n)}$
\[
\begin{tikzcd}
		\cdots \rar  &\pi_2\D(D^n) \rar &  \pi_2\widetilde{\mathrm{Diff}}_{\partial}(D^n)
	\rar  \ar[draw=none]{d}[name=X, anchor=center]{}
	&\pi_2(\frac{\widetilde{\mathrm{Diff}}_{\partial}(D^n)}{\D(D^n)} ) \ar[rounded corners,
	to path={ -- ([xshift=2ex]\tikztostart.east)
		|- (X.center) \tikztonodes
		-| ([xshift=-2ex]\tikztotarget.west)
		-- (\tikztotarget)}]{dll}[at end]{\ } \\      
  &\pi_1\D(D^n) \rar &  \pi_1\widetilde{\mathrm{Diff}}_{\partial}(D^n)
	\rar  \ar[draw=none]{d}[name=X, anchor=center]{}
	&\pi_1(\frac{\widetilde{\mathrm{Diff}}_{\partial}(D^n)}{\D(D^n)}  ) \ar[rounded corners,
	to path={ -- ([xshift=2ex]\tikztostart.east)
		|- (X.center) \tikztonodes
		-| ([xshift=-2ex]\tikztotarget.west)
		-- (\tikztotarget)}]{dll}[at end]{\ } \\      
	&\pi_0\D(D^n) \rar["\cong"] &\pi_0\widetilde{\mathrm{Diff}}_{\partial}(D^n).
	 & 
\end{tikzcd}
\]
 It is known that $\pi_i\widetilde{\mathrm{Diff}}_{\partial}(D^n)\cong \pi_{0}\D(D^{n+i})\cong \Theta_{n+i+1}$, the group of exotic $(n+i+1)$-spheres (e.g. \cite[\S 2.3.2]{ABK}). In order to compute $\pi_1 \D(D^n)$, one should deal with the relative homotopy group $\pi_i(\frac{\widetilde{\mathrm{Diff}}_{\partial}(D^n)}{\D(D^n)})$ for $i=1,2$.

We will use the Hatcher spectral sequence \cite[Section 2]{Hat1} $$E^1_{p,q}=\pi_q\mathcal{C}(D^n\times [0,1]^p)\Longrightarrow \pi_{p+q+1}\Big({\frac{\widetilde{\mathrm{Diff}}_{\partial}(D^n)}{\D(D^n)}} \Big),$$ 
where $\mathcal{C}(D^n\times [0,1]^p)$ denotes the concordance group of $D^n\times [0,1]^p$ (\cite[Section 1]{Hat1}) which consists of diffeomorphisms of $D^n\times [0,1]^p\times [0,1]$ that fix each point of $D^n\times [0,1]^p\times 0$ and $\partial(D^n\times [0,1]^p)\times [0,1]$.

For $\pi_{1}(\frac{\widetilde{\mathrm{Diff}}_{\partial}(D^n)}{\D(D^n)} )$, by Cerf's pseudoisotopy theorem, $E_{0,0}^1=\pi_0\mathcal{C}(D^n\times [0,1]^p)=0$ when $n\geq 5$ and $p\geq 0$. It follows that $\pi_{1}(\frac{\widetilde{\mathrm{Diff}}_{\partial}(D^n)}{\D(D^n)} )=0$ when $n\geq 5$.

For $\pi_{2}(\frac{\widetilde{\mathrm{Diff}}_{\partial}(D^n)}{\D(D^n)} )$, it is known that $E^1_{1,0}=\pi_0\mathcal{C}(D^n\times [0,1])=0$ when $n\geq 4$. 
The only differential we should consider is
$$d^1\colon E^1_{1,1}=\pi_1\mathcal{C}(D^{n+1})\longrightarrow E^1_{0,1}=\pi_1\mathcal{C}(D^n).$$
\begin{lem}\label{rlemma}
The differential $d^1\colon E^1_{1,1}\longrightarrow E^1_{0,1}$ is trivial.
\end{lem}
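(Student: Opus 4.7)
The strategy is to identify $d^{1}$ concretely as a difference of two restriction maps and then to annihilate this difference using the natural involution on the concordance space $\mathcal{C}(D^{n+1})$.

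First I would unwind Hatcher's construction so as to make $d^{1}$ explicit. The spectral sequence of \cite[Section 2]{Hat1} comes from a semi-simplicial resolution of $\widetilde{\mathrm{Diff}}_{\partial}(D^{n})/\D(D^{n})$ in which a $p$-simplex is essentially a concordance of $D^{n}\times[0,1]^{p}$, and the face maps are restrictions to the two ends of the last interval factor. After identifying $D^{n+1}=D^{n}\times I$, this realizes
\[
d^{1}\colon \pi_{1}\mathcal{C}(D^{n+1})\longrightarrow \pi_{1}\mathcal{C}(D^{n})
\]
as the alternating difference $\partial_{0*}-\partial_{1*}$ of the two end-restriction maps applied to a loop of concordances of $D^{n}\times I$.

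Second, I would introduce the interval-reflection involution $\tau$ on $\mathcal{C}(D^{n}\times I)$, obtained by reflecting the $I$-factor of $D^{n}\times I$ inside a concordance and conjugating so as to preserve the boundary condition. Because $\tau$ interchanges the two ends, $\partial_{1}=\partial_{0}\circ\tau$ as maps of spaces, and hence on $\pi_{1}$ we have the factorization
\[
d^{1}=\partial_{0*}\circ(1-\tau_{*}).
\]
Thus it suffices to show that $\tau_{*}$ acts as the identity on $\pi_{1}\mathcal{C}(D^{n}\times I)$, or at least that $1-\tau_{*}$ lands in $\ker\partial_{0*}$.

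The main obstacle is precisely this last step, the triviality of the involution action. I would attempt it in one of two ways: (i) by a direct symmetrization of representatives, writing a loop $\gamma$ as a concatenation $\gamma_{1}\cdot\gamma_{2}$ with $\gamma_{i}$ supported in $D^{n}\times[(i-1)/2,i/2]$, so that $\gamma$ is $\tau$-invariant up to a controlled correction; or (ii) by invoking the concordance-stable description of $\pi_{1}\mathcal{C}(D^{m})$ (Igusa's pseudoisotopy stability and the Hatcher--Waldhausen picture) in the range $n\geq 9$ and verifying directly that the induced action of $\tau$ on the stable group is trivial. The hard part, and what I expect to absorb most of the effort, is rigorously controlling the $\tau$-action, since a priori $\pi_{1}\mathcal{C}(D^{n}\times I)$ is nontrivial and the involution is not manifestly the identity on it.
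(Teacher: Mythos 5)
Your overall strategy (identify $d^1$ in terms of an involution, then kill it using stability and the known smallness of $\pi_1\mathcal{C}$) is in the same spirit as the paper's proof, but the specific identification of $d^1$ on which your argument rests has a genuine gap. You describe the two face maps $\partial_0,\partial_1\colon \mathcal{C}(D^n\times I)\to\mathcal{C}(D^n)$ as ``restrictions to the two ends of the last interval factor.'' But an element of $\mathcal{C}(D^n\times I)$ is by definition a diffeomorphism of $D^n\times I\times I$ that is already the identity on $\partial(D^n\times I)\times I\supset D^n\times\{0,1\}\times I$, so restriction to either end is literally trivial. The actual maps entering Hatcher's $d^1$ are more subtle (they involve turning/wrapping constructions), and the only formula available in the literature is for the composite with the stabilization map $\sigma\colon\mathcal{C}(D^n)\to\mathcal{C}(D^{n+1})$, namely $d^1(\sigma_*[g])=[g]+[\overline{g}]$, where the involution lives on the \emph{target} $\mathcal{C}(D^n)$, not on the source. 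Your proposed factorization $d^1=\partial_{0*}\circ(1-\tau_*)$, with $\tau$ an involution of the source, is therefore unjustified as stated; note also that it is a difference where the standard formula is a sum, which matters if one is not working in a group of exponent $2$.

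The paper's route repairs exactly this: it invokes Igusa's stability theorem (this is where $n\geq 9$ is used) to conclude that $\sigma_*\colon\pi_1\mathcal{C}(D^n)\to\pi_1\mathcal{C}(D^{n+1})$ is an isomorphism, so that \emph{every} class in $E^1_{1,1}$ is of the form $\sigma_*[g]$ and the formula $d^1(\sigma_*[g])=[g]+[\overline{g}]$ applies to everything; then $\pi_1\mathcal{C}(D^n)\cong\Z_2$ forces the involution to be the identity and $[g]+[\overline{g}]=2[g]=0$. Your option (ii) is essentially this argument, and I would point out that the step you flag as the ``hard part'' --- triviality of the involution --- is actually free once you know the group is $\Z_2$, since the involution induces a group automorphism and $\Z_2$ has none but the identity; no symmetrization of representatives as in your option (i) is needed. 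The real content you are missing is (a) the correct source of the involution formula (valid only after precomposition with $\sigma$) and (b) the explicit appeal to Igusa stability to reduce the whole of $\pi_1\mathcal{C}(D^{n+1})$ to stabilized classes.
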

\begin{proof}
Let $\sigma\colon\mathcal{C}(D^n)\rightarrow \mathcal{C}(D^n\times [0,1])$ be the stabilization map.
According to Igusa's stability theorem for $D^n$ (see \cite[Page 7, Proposition \& Page 10, Theorem]{Igusa}, the induced homomorphism
$$\sigma_*\colon\pi_1\mathcal{C}(D^{n})\rightarrow \pi_1 \mathcal{C}(D^{n+1})$$
is an isomorphism. It is also known that the stable group $\lim\limits_{n\rightarrow \infty}\pi_1\mathcal{C}(D^n)$ induced by $\sigma_{*}$ is isomorphic to $\Z_2$ (see Remark \ref{remofC1} for more details).

Let $[g]\in \pi_1\mathcal{C}(D^{n})$, by the description of $d^1$ (see e.g. \cite[Page 6]{Hat1}, \cite[Page 199]{Kupers}, we have
\[
d^1(\sigma_*[g])=[g]+[\overline{g}],
\] 
where $``-"$ is the involution defined on $\mathcal{C}(D^n)$ by reflecting $[0,1]$ from the midpoint and normalizing \cite[Page 6]{Hat1}.

Since $\pi_1\mathcal{C}(D^n)\cong \Z_2$, the involution on $\pi_1\mathcal{C}(D^n)$ is trivial and
$d^1(\sigma_*[g])=2[g]=0$. 
\end{proof}

By Lemma \ref{rlemma}, $E^{\infty}_{1,0}=0, E^{\infty}_{0,1}\cong \pi_1\mathcal{C}(D^n)\cong \Z_2$. Therefore we have \begin{prop}\label{mainProp}
	$\pi_2(\frac{\widetilde{\mathrm{Diff}}_{\partial}(D^n)}{\D(D^n)} ) \cong\Z_2$.
\end{prop}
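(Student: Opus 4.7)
The plan is to read off $\pi_2\!\left(\frac{\widetilde{\mathrm{Diff}}_{\partial}(D^n)}{\D(D^n)}\right)$ directly from the $E^\infty$-page of the Hatcher spectral sequence. Since $E^1_{p,q}\Rightarrow \pi_{p+q+1}$ of the quotient, the only filtration quotients contributing to $\pi_2$ sit on the anti-diagonal $p+q=1$, namely $E^\infty_{0,1}$ and $E^\infty_{1,0}$.

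The term $E^\infty_{1,0}$ already vanishes on the $E^1$-page, since $E^1_{1,0}=\pi_0\mathcal{C}(D^n\times [0,1])=0$ by Cerf's pseudoisotopy theorem (as noted in the paragraph preceding Lemma~\ref{rlemma}). For $E^\infty_{0,1}$, the $E^1$-entry is $\pi_1\mathcal{C}(D^n)\cong\Z_2$ by Igusa stability combined with the stable value recalled in the proof of Lemma~\ref{rlemma}. Outgoing differentials from $E^r_{0,1}$ land at negative $p$, so they vanish automatically. The only incoming differential I need to kill is $d^1\colon E^1_{1,1}\to E^1_{0,1}$, which is precisely Lemma~\ref{rlemma}; higher differentials $d^r\colon E^r_{r,2-r}\to E^r_{0,1}$ are either at $q<0$ (for $r\geq 3$) or, for $r=2$, have source a subquotient of $E^1_{2,0}=\pi_0\mathcal{C}(D^n\times[0,1]^2)=0$, again by Cerf. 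Hence $E^\infty_{0,1}\cong\Z_2$.

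Combining these, the filtration on $\pi_2$ collapses to a single non-zero associated graded piece of order two, with no extension problem, yielding $\pi_2\!\left(\frac{\widetilde{\mathrm{Diff}}_{\partial}(D^n)}{\D(D^n)}\right)\cong\Z_2$. The main work behind this proposition was already carried out in Lemma~\ref{rlemma} (and in the identification of the stable $\pi_1\mathcal{C}$); the only obstacle that remains is the routine spectral-sequence bookkeeping sketched above, verifying that no other differentials can disturb the $\Z_2$ on the $(0,1)$-spot.
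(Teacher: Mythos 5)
Your proof is correct and follows essentially the same route as the paper: the same Hatcher spectral sequence, the vanishing of $E^1_{1,0}$ via Cerf, the identification $E^1_{0,1}=\pi_1\mathcal{C}(D^n)\cong\Z_2$ via Igusa stability, and Lemma~\ref{rlemma} to kill the only relevant differential. Your extra bookkeeping on the higher differentials (noting $E^1_{2,0}=0$ kills the potential $d^2$) is a slightly more explicit version of what the paper leaves implicit, but it is not a different argument.
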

As a quick application, we calculate two special examples.


\begin{example}
	 $\pi_1 \D(D^{10})\cong \Z_2$.
	 
Note that the homomorphism $\pi_2\widetilde{\mathrm{Diff}}_{\partial}(D^{10})\cong \Theta_{13}\cong \Z_3 \longrightarrow 	\pi_2(\frac{\widetilde{\mathrm{Diff}}_{\partial}(D^{10})}{\D(D^{10})} )$ is trivial and $\pi_1\widetilde{\mathrm{Diff}}_{\partial}(D^{10})\cong \Theta_{12}=0$. It follows that $\pi_1 \D(D^{10})\cong \pi_2(\frac{\widetilde{\mathrm{Diff}}_{\partial}(D^{10})}{\D(D^{10})} )\cong \Z_2$.

\end{example}

\begin{example}
	$\pi_1 \D(D^{27})\cong \Z_2\oplus \Z_3$.
	
According to \cite[Table 1]{IsakenWangXu}, $\Theta_{29}\cong\Z_3$ and $\Theta_{30}\cong \Z_3$. The homomorphism $\pi_2\widetilde{\mathrm{Diff}}_{\partial}(D^{27})\cong \Theta_{30}\cong \Z_3 \longrightarrow 	\pi_2(\frac{\widetilde{\mathrm{Diff}}_{\partial}(D^{27})}{\D(D^{27})} )$ is trivial. One has the following exact sequence of abelian groups
\[
0\longrightarrow \pi_2\Big(\frac{\widetilde{\mathrm{Diff}}_{\partial}(D^{27})}{\D(D^{27})} \Big) \cong\Z_2\longrightarrow \pi_1\D(D^{27})\longrightarrow  \pi_1\widetilde{\mathrm{Diff}}_{\partial}(D^{27})\cong \Z_3 \longrightarrow 0.
\]
This sequence splits and $\pi_1 \D(D^{27})\cong \Z_2\oplus \Z_3$. 
	
\end{example}

\begin{rem}\label{remofC1}
Let $\mathscr{C}(D^n)=\mathrm{hocolim}_{s}\mathcal{C}(D^n\times [0,1]^s)$ be the homotopy colimit induced by the stabilization map $\sigma\colon\mathcal{C}(D^n\times [0,1]^s)\longrightarrow \mathcal{C}(D^n\times [0,1]^{s+1})$. By  Igusa's stability theorem, $ \pi_1\mathcal{C}(D^n) \cong  \pi_1\mathscr{C}(D^n)$.

On the other hand, $ \pi_1\mathscr{C}(D^n)\cong \pi_3 {Wh}^{\mathrm{Diff}}(D^n)\cong  \Z_2$, where ${Wh}^{\mathrm{Diff}}(D^n)$ is the Whitehead space of $D^n$ (see e.g. \cite[Page 7, 8]{Hat2}, \cite[Page 178, 179]{Rognes1}, \cite[Page 915]{Rognes2}). 
\end{rem}

\begin{rem}\label{referee}
In Hatcher's survey \cite{Hat1}, his Corollary 3.3 says $ \pi_1\mathcal{C}(D^n)$ has order 4 ($n$ large). This is NOT correct.
As one of the referees mentioned, the reason is that the exact sequence in his Theorem 3.2 (cited from a never-published Igusa's
preprint) is incorrect, which is not necessarily surjective at the right. For the correct version of this sequence and the correct definition of the abelian group $\mathrm{Wh}_3(e)$, see Definition 2.6 and
Corollary 2.7 of \cite{Jahren}. With these corrections, the problems in Section 8 of \cite{Hat1} are resolved.
\end{rem}

\section{The Gromoll filtration group $\Gamma_3^{4k-1}$ for $k\geq 4$}
According to \cite[Corollary 2.3.3]{ABK}, the image of the homomorphism $\pi_2\D(D^n)\longrightarrow \pi_2\widetilde{\mathrm{Diff}}_{\partial}(D^n)$
is equal to the Gromoll filtration group $\Gamma^{n+3}_{3}$. When $n\geq 9$, by Proposition \ref{mainProp}, we have exact sequence
\[
\pi_2\D(D^n)\longrightarrow  \pi_2\widetilde{\mathrm{Diff}}_{\partial}(D^n) \longrightarrow \pi_2\Big(\frac{\widetilde{\mathrm{Diff}}_{\partial}(D^n)}{\D(D^n)} \Big) \cong\Z_2.
\]
Observe that the quotient group $\pi_2\widetilde{\mathrm{Diff}}_{\partial}(D^n)/\Gamma_3^{n+1}\cong \Theta_{n+3}/\Gamma_3^{n+3}$ is a subgroup of $\Z_2$.

\

\begin{proof}[Proof of Theorem \ref{mainthm2}]
	
 When $n=4k\geq 8$, according to \cite[Section 6.6]{WeissWilliams}, the generator of the finite cyclic group $bP_{4k}$ does not belong to $\Gamma_3^{4k-1}$. Combing the discussion above, the quotient group $\Theta_{4k-1}/\Gamma_3^{4k-1}\cong \Z_2$ when $k\geq 4$. Part (a) follows.

For Part (b), note that the subgroup $2bP_{4k}=\{2x \ |\ x\in bP_{4k}\}$ lies in $\Gamma_3^{4k-1}$ and one has the following exact sequences
\[
\begin{CD}
0 @>>> bP_{4k}@>>> \Theta_{4k-1} @>>> \Theta_{4k-1}/bP_{4k}@>>> 0\\
@.    @AAA        @AAA    @AA\cong A  @.\\
0 @>>> 2bP_{4k}@>>> \Gamma_3^{4k-1} @>>> \Gamma_3^{4k-1}/2bP_{4k}@>>> 0.\\
\end{CD}
\]
Here $\Gamma_3^{4k-1}/2bP_{4k}\cong \Theta_{4k-1}/bP_{4k}$ is due to the Snake Lemma. 

Recall that $\Theta_{4k-1}$ is an abelian group extension of $bP_{4k}$ by $\Theta_{4k-1}/bP_{4k}$ (e.g. \cite[Chapter 3]{Maclane}). Observe that the extension class $[\Theta_{4k-1}]\in \mathrm{Ext}(\Theta_{4k-1}/bP_{4k},bP_{4k})$ is the image of the extension class $[\Gamma_3^{4k-1}]$ through the homomorphism 
$$ \mathrm{Ext}(\Theta_{4k-1}/bP_{4k},2bP_{4k})\longrightarrow \mathrm{Ext}(\Theta_{4k-1}/bP_{4k},bP_{4k}).$$

According to \cite[Theorem 1.3]{Brumfiel}, the upper horizontal exact sequence
splits. Then $[\Theta_{4k-1}]=0\in \mathrm{Ext}(\Theta_{4k-1}/bP_{4k},bP_{4k})$ and Part (c) follows.
\end{proof}
\begin{example}
	$\Gamma_{3}^{27}\cong 2bP_{28}$, $\Gamma_{3}^{43}\cong 2bP_{44}$ and $\Gamma_{3}^{55}\cong 2bP_{56}\oplus \Z_3$.
	
According to \cite[Table 1]{IsakenWangXu}, $\Theta_{27}=bP_{28}$ , $\Theta_{43}=bP_{44}$ and $\Theta_{55}\cong bP_{56}\oplus \Z_3$. By Theorem \ref{mainthm2}-(a), one has 	$\Gamma_{3}^{27}\cong 2bP_{28}$ and $\Gamma_{3}^{43}\cong 2bP_{44}$. $\Theta_{55}/bP_{56}\cong \Z_3$ implies the homomorphism $\mathrm{Ext}(\Theta_{55}/bP_{56}, 2bP_{4k})\longrightarrow \mathrm{Ext}(\Theta_{55}/bP_{56}, bP_{4k})$ is injective. By Theorem \ref{mainthm2}-(c), the extension class $[\Gamma_{3}^{55}]$ is trivial and $\Gamma_{3}^{55}\cong 2bP_{56}\oplus \Z_3$.
\end{example}

\section{Proof of Theorem \ref{main thm} and Theorem \ref{mainthm3}} 
\begin{proof}[Proof of Theorem \ref{main thm}]
When $n=4k$ with $k\geq 3$,  $\pi_2\widetilde{\mathrm{Diff}}_{\partial}(D^n)/\Gamma_3^{n+1}\cong \Z_2$ due to Theorem \ref{mainthm2}-(a). Then the homomorphism $$\pi_2\widetilde{\mathrm{Diff}}_{\partial}(D^{4k}) \longrightarrow \pi_2\Big(\frac{\widetilde{\mathrm{Diff}}_{\partial}(D^{4k})}{\D(D^{4k})} \Big ) \cong\Z_2$$ 
is surjective and one has isomorphism
\[
\pi_1\D(D^{4k})\longrightarrow  \pi_1\widetilde{\mathrm{Diff}}_{\partial }(D^{4k}) \cong \pi_0\widetilde{\mathrm{Diff}}_{\partial}(D^{4k+1})\cong \Theta_{4k+2}.
\]
\end{proof}
\begin{rem}\label{Remark1}
In the proof of Proposition \ref{mainProp}, we need Igusa's stability theorem and the condition $n\geq 9$ is necessary. When $n=8$, the method in this note seems not very helpful to the calculation of the group $\pi_2(\frac{\widetilde{\mathrm{Diff}}_{\partial}(D^8)}{\D(D^8)})$.   
\end{rem}

At the end of this note, we would like to use the results above to prove Theorem \ref{mainthm3}.
\begin{proof}[Proof of Theorem \ref{mainthm3}]
Choose a disc $D^{4k}$ in the interior of $M^{4k}$ and one has group homomorphism
\[
\D(D^{4k})\rightarrow \D(M^{4k}) 
\]
\[
f\mapsto \hat{f},
\]
where $\hat{f}$ is the extension of $f$ with $\hat{f}|_{D^{4k}}=f$ and $\hat{f}|_{M^{4k}-D^{4k}}=id$.
Consider the following commutative diagram of exact sequences
\[
\begin{CD}
	\pi_2(\frac{\widetilde{\mathrm{Diff}}_{\partial}(D^{4k})}{\D(D^{4k})} ) @> trivial >> \pi_1\D(D^{4k}) @>>>   \pi_1\widetilde{\mathrm{Diff}}_{\partial}(D^{4k}) @>>>	\pi_1(\frac{\widetilde{\mathrm{Diff}}_{\partial}(D^{4k})}{\D(D^{4k})} )\\
	@V\cong VV   @VVV    @VVV  @VVV\\
		\pi_2(\frac{\widetilde{\mathrm{Diff}}_{\partial}(M^{4k})}{\D(M^{4k})} ) @>>> \pi_1\D(M^{4k}) @>>>   \pi_1\widetilde{\mathrm{Diff}}_{\partial}(M^{4k}) @>>>	\pi_1(\frac{\widetilde{\mathrm{Diff}}_{\partial}(M^{4k})}{\D(M^{4k})} ).\\
\end{CD}
\]
When $M^{4k}$ is 3-connected, by Morlet disjunction \cite[Corollary 3.2]{BLR}, $\pi_i(\frac{\widetilde{\mathrm{Diff}}_{\partial}(D^{4k})}{\D(D^{4k})} )\cong \pi_i(\frac{\widetilde{\mathrm{Diff}}_{\partial}(M^{4k})}{\D(M^{4k})} )$ for $1\leq i \leq 3$. 
$\pi_1(\frac{\widetilde{\mathrm{Diff}}_{\partial}(M^{4k})}{\D(M^{4k})})\cong  \pi_1(\frac{\widetilde{\mathrm{Diff}}_{\partial}(D^{4k})}{\D(D^{4k})} )=0$ implies $\pi_1\D(M^{4k}) \longrightarrow \pi_1\widetilde{\mathrm{Diff}}_{\partial}(M^{4k})$ is surjective.

According to Theorem \ref{main thm}, $\pi_1\D(D^{4k}) \cong   \pi_1\widetilde{\mathrm{Diff}}_{\partial}(D^{4k})$ implies the boundary homomorphism $\pi_2(\frac{\widetilde{\mathrm{Diff}}_{\partial}(D^{4k})}{\D(D^{4k})} ) \longrightarrow \pi_1\D(D^{4k})$ is trivial.
The isomorphism $\pi_2(\frac{\widetilde{\mathrm{Diff}}_{\partial}(M^{4k})}{\D(M^{4k})})\cong  \pi_2(\frac{\widetilde{\mathrm{Diff}}_{\partial}(D^{4k})}{\D(D^{4k})} )$ implies the boundary homomorphism
$\pi_2(\frac{\widetilde{\mathrm{Diff}}_{\partial}(M^{4k})}{\D(M^{4k})} ) \longrightarrow \pi_1\D(M^{4k})$ is also trivial, which shows that the homomorphism $\pi_1\D(M^{4k}) \longrightarrow \pi_1\widetilde{\mathrm{Diff}}_{\partial}(M^{4k})$ is injective.
\end{proof}

\section*{Acknowledgments}
The author would like to thank the anonymous referees for their valuable comments. In particular, Remark \ref{referee} is due to one of them. The author would also like to thank Yi Jiang, Zhi L\"{u}, Jianzhong Pan and Yang Su for various helpful discussions. Many thanks to Alexander Kupers for many suggestions on studying topology of diffeomorphism groups and providing many useful references.

\end{document}